\documentclass[11pt]{article}
\usepackage[a4paper,margin=1in]{geometry}
\usepackage{amsmath,amssymb,amsthm,mathtools}
\usepackage{graphicx}
\usepackage{booktabs}
\usepackage{caption}
\usepackage{subcaption}
\usepackage{xcolor}
\usepackage[colorlinks=true,linkcolor=blue!55!black,citecolor=blue!55!black,urlcolor=blue!55!black]{hyperref}
\usepackage{enumitem}
\usepackage{authblk}

\newtheorem{proposition}{Proposition}

\newcommand{\R}{\mathbb{R}}
\newcommand{\Simplex}{\Delta^{n-1}}
\newcommand{\Sig}{\Sigma}

\newcommand{\T}{^{\mathsf{T}}}
\newcommand{\diag}{\operatorname{diag}}

\newcommand{\Herf}{\mathcal{H}}
\newcommand{\risk}{\mathcal{R}}
\DeclareMathOperator*{\argmin}{arg\,min}
\DeclareMathOperator*{\argmax}{arg\,max}

\title{\bfseries A Differential-Geometric Framework for\\ Risk-Optimal Asset Reallocation}
\author{Georgios Leventidis}
\author{Georgios Leventidis\\
\vspace{-0.4cm}
georgiosleventidis@gmail.com\\
Evangelos Melas \\
\small Department of Economics, National and Kapodistrian University of Athens\\
\small Sofokleous 1, 10559 Athens, Greece\\
emelas@econ.uoa.gr
}
\date{}

\date{December 2025}

\begin{document}
\maketitle

\begin{abstract}
\noindent
The Markowitz optimum tells a portfolio manager \emph{where} to go, but not \emph{how} to get
there. In practice a portfolio cannot be rebalanced instantaneously: capital must be moved
gradually, and the sequence of intermediate portfolios visited during the transition itself
carries risk. We formalise portfolio reallocation as a problem in Riemannian geometry. The
space of allocations is the probability simplex, which we equip with a positive-definite
\emph{risk metric} that combines market (covariance) risk with concentration risk. The total
risk accumulated along a rebalancing trajectory is then exactly its geometric length, and the
least-risk reallocation from a starting portfolio $A$ to the mean--variance optimum $B$ is a
\emph{geodesic} of this metric. We compare three trajectories that all connect $A$ to $B$: the
straight line (direct rebalancing), the projected gradient-ascent flow of the Markowitz
objective (myopic rebalancing), and the risk geodesic. Two results anchor the framework. First,
when only market risk is priced the metric is constant, the manifold is flat, and the geodesic
is exactly the straight line---so linear rebalancing is provably optimal and geometry adds
nothing. Second, once the metric varies with position the manifold acquires curvature, geodesics
bend, and the geodesic \emph{weakly dominates} every competing path: its transition risk is
never larger. We show that a Fisher--Rao concentration term yields modest but strictly positive
and monotone savings ($20$--$70$ basis points), whereas an \emph{endogenous}
correlation-breakdown metric---formalising the well-known fact that crowded co-holdings become
dangerously correlated in stress---produces a non-convex risk ridge that the direct path is
forced to cross. Here the geodesic detours through uncorrelated assets and cuts transition risk
by up to $25\%$. Curvature is computed explicitly and confirms the mechanism ($K=0$ for the
market metric, $K<0$ once concentration is priced). A Monte-Carlo study over random starting
portfolios and a regression of the risk saving on geometric features quantify \emph{when} the
geometric route pays off. The method turns transition management into a solved shortest-path
problem and is agnostic to the number of assets and the risk factors priced into the metric.
\end{abstract}

\section{Introduction}

Modern portfolio theory, initiated by Markowitz \cite{markowitz1952} and rewarded with the 1990
Nobel Memorial Prize, recast investing as constrained optimisation: for a universe of $n$ assets
with expected-return vector $\mu$ and return-covariance matrix $\Sig$, the investor trades off
expected return against variance and selects a portfolio on the efficient frontier. Seventy
years of theory and practice---mutual funds, ETFs, target-date funds, risk parity, robo-advisers,
and the Black--Litterman refinement \cite{black1992}---rest on this foundation.

Yet mean--variance analysis is silent on a question that dominates real trading desks. The optimiser
returns a \emph{target} allocation $B$; it says nothing about the \emph{path} from the current
allocation $A$ to $B$. For large institutional books this omission is expensive. Many
positions---in illiquid credit, small caps, private assets, or simply large blocks---cannot be
turned over instantaneously without incurring market impact, and mandates often forbid abrupt
changes in exposure. The book must therefore be moved \emph{gradually}, and every intermediate
portfolio it passes through on the way to $B$ is a portfolio the fund actually holds, with its own
market and concentration risk. This is the problem of \emph{transition management}: not \emph{what}
to hold, but \emph{how to travel} from one holding to another at least cumulative risk.

The central observation of this paper is that ``how to travel at least risk'' is, mathematically,
a request for a shortest path---provided one measures distance in units of risk rather than in
Euclidean weight-space. If we assign to each allocation a local, direction-sensitive ``cost of
moving'' that is high wherever a portfolio is risky and low wherever it is safe, then the total
risk of any rebalancing trajectory becomes the geometric length of a curve, and the least-risk
trajectory becomes a geodesic. Differential geometry supplies both the language (Riemannian
metrics, geodesics, curvature) and a mature computational toolbox (the calculus of variations,
boundary-value geodesic solvers) for solving it exactly.

This paper develops that observation into a complete and computational framework. We model the
space of allocations as the probability simplex $\Simplex$, viewed as a smooth $(n\!-\!1)$-manifold,
and equip it with a positive-definite risk metric $g(w)$ that combines market risk, carried by the
return covariance $\Sig$, with concentration risk, carried by a Fisher--Rao (Shahshahani) term; the
total transition risk of a rebalancing path is then its Riemannian length. Against this backdrop we
compare three reallocations that share the same endpoints $A$ and $B$: the straight line, the
projected gradient-ascent flow of the Markowitz objective---the myopic strategy that always moves in
the locally most-improving direction---and the risk geodesic. For each we give the governing
equations, together with a clean closed form for the Markowitz target itself.

Two structural results organise the analysis. The first concerns flatness: when only covariance
risk is priced the metric is constant, its curvature vanishes, and the geodesic coincides exactly
with the straight line, so direct rebalancing is already optimal and geometry has nothing to add.
The second is a domination property: the moment any position-dependent risk enters the metric, the
geodesic's transition risk can be no larger than that of the straight line or of the gradient flow,
so the geometric route can only help. Between these two poles lies the substantive contribution, an
endogenous risk metric that formalises correlation breakdown. When two correlated names are held
together the portfolio sits on a crowded trade whose effective risk is amplified in stress, and
because this penalty is bilinear in the weights rather than convex it is not dominated by the
endpoints; the direct chord is therefore driven through the danger region while a geodesic can skirt
it by routing through a third, uncorrelated asset.

We validate the framework numerically on a simulated six-asset market, reporting explicit Gaussian
curvature, a fully worked three-asset visualisation, weight trajectories, scaling laws in the risk
parameters, a Monte-Carlo distribution of the risk saving, and a regression of that saving on the
geometry of each transition. Throughout we are deliberately careful about magnitudes. For a
well-diversified target under a purely concentration-based metric the saving is genuine but small,
on the order of tens of basis points, in keeping with the fact that a straight line between two
portfolios already diversifies at its midpoint. It becomes economically large only when the priced
risk is not convex-favourable to the chord, which is precisely the regime of endogenous,
crowding-driven risk, and identifying and quantifying that regime is the practical payoff of the
geometric viewpoint.

\section{Related work}\label{sec:related}

The migration of differential-geometric methods from physics into finance accelerated with
Henry-Labord\`ere's use of spectral and geometric techniques in option pricing and volatility
modelling \cite{henrylabordere2009}. Farinelli's geometric arbitrage theory \cite{farinelli2008}
embeds stochastic finance in a fibre-bundle framework in which arbitrage appears as the curvature of
a connection and the no-free-lunch condition corresponds to flatness, while Sandhu \emph{et al.}\
\cite{sandhu2016} showed that the Ollivier--Ricci curvature of a market correlation network tracks
systemic fragility, linking high curvature to concentrated, crash-prone configurations and low
curvature to diversified, robust ones. Together these works establish curvature as a financially
meaningful quantity, and it is in that spirit that we use it below as the diagnostic that separates
the cases where geometry helps from those where it is redundant.

A second and closely related thread comes from information geometry. The Fisher information metric
\cite{amari2016,nielsen2020} is the canonical Riemannian metric on a statistical manifold;
specialised to the simplex it becomes the Shahshahani metric $g_{ij}=\delta_{ij}/w_i$, which
diverges towards the boundary and is the infinitesimal form of the Kullback--Leibler divergence. It
has been used to measure distances between portfolios and to cluster securities by risk profile
through geodesic distance \cite{marriott2000,jurczenko2019,asta2019}, and more recent work extends
it to L\'evy and tempered-stable models \cite{choi2025,kim2025}. We adopt this term for
concentration risk precisely because it is principled and guarantees a positive-definite, well-posed
problem on the interior, which places the construction within an established tradition rather than
resting on an ad-hoc penalty.

Portfolio problems have been posed on manifolds in several other ways as well, through logarithmic
divergence and mirror descent \cite{karatzas2018}, through projective geometry
\cite{piotrowski2007}, and through curved mean--variance--ESG frontiers \cite{mounir2025}, while
optimal-transport and Wasserstein methods
\cite{backhoff2020,doldi2022,gao2022,blanchet2022,riess2025} supply a complementary geometry acting
on the \emph{distributions} of returns. Our object is more elementary than any of these: the
finite-dimensional manifold of allocations themselves, on which a reallocation is literally a curve.
Because computing a geodesic between two prescribed endpoints is a two-point boundary-value problem,
the framework inherits the classical connection between geodesics and optimal control
\cite{kaya1998,noakes2004,girardin2004,sussmann1997}, in which the Euler--Lagrange equations of the
risk-length functional are exactly the geodesic equations and their solution is the globally optimal
reallocation schedule; we exploit this by computing geodesics through the direct minimisation of a
discrete energy, which is robust and avoids explicit assembly of the Christoffel symbols.

The thesis nearest to ours is that concentration risk should manifest as regions of high curvature
that an optimal transition ought to avoid \cite{sandhu2016,karatzas2018}, and this is the conceptual
seed we build on. What we add is to make it operational and quantitative: to write down explicit
positive-definite metrics, to prove when the effect is exactly null and when it is present, to
compute the curvature, and to measure the resulting risk savings under controlled experiments.

\section{The reallocation manifold and its risk metric}\label{sec:framework}

\subsection{The allocation simplex as a manifold}

A long-only portfolio over $n$ assets is a point of the probability simplex
\begin{equation}
\Simplex=\Big\{\,w=(w_1,\dots,w_n)\in\R^n:\ w_i>0,\ \textstyle\sum_{i=1}^n w_i=1\,\Big\},
\end{equation}
an open $(n\!-\!1)$-dimensional smooth manifold. We use the global chart
$u=(w_1,\dots,w_{n-1})$ with $w_n=1-\sum_{i<n}w_i$; the embedding
$w:\ u\mapsto(u_1,\dots,u_{n-1},\,1-\mathbf 1\T u)$ has constant Jacobian
\begin{equation}
J=\frac{\partial w}{\partial u}=
\begin{bmatrix} I_{n-1}\\[-1pt] -\mathbf 1\T \end{bmatrix}\in\R^{n\times(n-1)} .
\end{equation}
The tangent space at any $w$ is the ``budget-neutral'' subspace
$T_w\Simplex=\{v\in\R^n:\mathbf 1\T v=0\}$: a rebalancing trade neither creates nor destroys
capital, so weight changes sum to zero.

\subsection{The Markowitz target}\label{sec:markowitz}

Given a risk-aversion $\lambda>0$, the target portfolio $B$ solves
\begin{equation}
B=\argmax_{\mathbf 1\T w=1}\ \Big(\mu\T w-\lambda\, w\T\Sig w\Big).
\end{equation}
Introducing a multiplier for the budget constraint and solving the stationarity condition
$\mu-2\lambda\Sig w-\gamma\mathbf 1=0$ gives the closed form, in terms of the
Merton constants $\mathsf a=\mathbf 1\T\Sig^{-1}\mathbf 1$ and $\mathsf b=\mathbf 1\T\Sig^{-1}\mu$,
\begin{equation}
\boxed{\;B=\underbrace{\frac{\Sig^{-1}\mathbf 1}{\mathsf a}}_{\text{minimum-variance}}
\;+\;\frac{1}{2\lambda}\underbrace{\Big(\Sig^{-1}\mu-\tfrac{\mathsf b}{\mathsf a}\,\Sig^{-1}\mathbf 1\Big)}_{\text{speculative tilt}}\; }
\label{eq:markowitz}
\end{equation}
i.e.\ the global minimum-variance portfolio plus a return-seeking tilt whose size scales with
risk tolerance $1/\lambda$. Equation~\eqref{eq:markowitz} is the standard two-fund result of
mean--variance analysis: every optimal portfolio is the same fixed combination of the
minimum-variance portfolio and a single return-seeking direction, mixed in a proportion set by
$\lambda$.\footnote{The signs of the two terms are the easiest place to slip when the multiplier is
eliminated: reversing the sign of the tilt (or of the $\mathsf b/\mathsf a$ correction inside it)
yields a formula that appears dimensionally sensible but sends the portfolio \emph{away} from the
minimum-variance point as risk aversion rises. The signs in~\eqref{eq:markowitz} are pinned down by
the limit $\lambda\to\infty$, in which the tilt must vanish and $B$ must reduce to the global
minimum-variance portfolio $\Sig^{-1}\mathbf 1/\mathsf a$; \eqref{eq:markowitz} satisfies this
check, and we verify it numerically in Section~\ref{sec:results}.} We take $B$ as the destination
of every reallocation studied below.

\subsection{A risk metric on the simplex}\label{sec:metric}

The heart of the framework is a Riemannian metric $g(w)$: a smoothly varying, symmetric,
positive-definite bilinear form on each tangent space, encoding the instantaneous risk of trading
in a given direction from a given allocation. We build it additively from the two risks priced by
the model.

\emph{Market risk} is the return covariance $\Sig$. Moving the book by an infinitesimal trade
$\mathrm dw$ changes portfolio return variance at second order by $\mathrm dw\T\Sig\,\mathrm dw$;
this is the natural, and constant, market contribution.

\emph{Concentration risk} penalises exposure to individual names. We use the Fisher--Rao
(Shahshahani) metric of the simplex, $\diag(1/w_1,\dots,1/w_n)$, which is positive-definite on the
interior and diverges as any weight approaches zero---so drifting toward a vertex (a single-name
portfolio) becomes geometrically expensive. This choice is principled (it is \emph{the} information
metric of the simplex), guarantees a well-posed problem, and connects directly to the information-
geometry literature reviewed above.

Combining them with a concentration-aversion weight $\kappa\ge0$ gives the \emph{market\,+\,concentration
metric}
\begin{equation}
G(w)=\Sig+\kappa\,\diag\!\Big(\tfrac1{w_1},\dots,\tfrac1{w_n}\Big)\ \in\R^{n\times n},
\qquad \kappa\ge0 .
\label{eq:fisher-metric}
\end{equation}
Because $\Sig\succ0$ and $\diag(1/w)\succ0$ on the interior, $G(w)\succ0$ throughout, and the
induced metric in the chart $u$ is
\begin{equation}
g(u)=J\T\,G\big(w(u)\big)\,J\ \in\R^{(n-1)\times(n-1)},\qquad g(u)\succ0 .
\label{eq:induced}
\end{equation}
Setting $\kappa=0$ recovers the pure market metric $G\equiv\Sig$, constant in $w$.

\subsection{Transition risk as length; geodesics}\label{sec:length}

A reallocation is a curve $\gamma:[0,1]\to\Simplex$ with $\gamma(0)=A$, $\gamma(1)=B$. Its
\emph{transition risk} is the Riemannian length
\begin{equation}
\risk[\gamma]=\int_0^1\sqrt{\dot\gamma(t)\T\,G\big(\gamma(t)\big)\,\dot\gamma(t)}\;\mathrm dt ,
\label{eq:risklen}
\end{equation}
the accumulated ``risk travelled'' as the book moves from $A$ to $B$. Equation~\eqref{eq:risklen}
is reparametrisation-invariant: it measures the geometric route, not the speed schedule. The
least-risk reallocation is its minimiser,
\begin{equation}
\gamma^\star=\argmin_{\gamma(0)=A,\ \gamma(1)=B}\ \risk[\gamma],
\end{equation}
which is a \emph{geodesic} of $G$. Minimisers of $\risk$ coincide with minimisers of the energy
$E[\gamma]=\int_0^1\dot\gamma\T G\dot\gamma\,\mathrm dt$ (by Cauchy--Schwarz, with equality for
affine-parametrised geodesics), and the latter is numerically better behaved; we minimise $E$ and
report $\risk$. In coordinates the geodesic satisfies
\begin{equation}
\ddot u^{\,k}+\Gamma^k_{ij}(u)\,\dot u^{\,i}\dot u^{\,j}=0,\qquad u(0)=u_A,\ u(1)=u_B,
\label{eq:geodesic}
\end{equation}
with Christoffel symbols
$\Gamma^k_{ij}=\tfrac12 g^{k\ell}\big(\partial_i g_{j\ell}+\partial_j g_{i\ell}-\partial_\ell g_{ij}\big)$
built from the induced metric~\eqref{eq:induced}.

\subsection{Three reallocations from \texorpdfstring{$A$}{A} to \texorpdfstring{$B$}{B}}\label{sec:paths}

We contrast three trajectories with identical endpoints, written $p_0,p_1,p_2$ throughout.

\smallskip\noindent\textbf{(i) Direct rebalancing --- the straight line $p_0$.}
\begin{equation}
p_0(t)=(1-t)\,A+t\,B .
\end{equation}
This is the shortest Euclidean route and, as Proposition~\ref{prop:flat} shows, the exact
risk-geodesic of the market-only metric.

\smallskip\noindent\textbf{(ii) Myopic rebalancing --- the gradient flow $p_1$.}
The greedy strategy always trades in the direction that most improves the Markowitz objective
$f(w)=\mu\T w-\lambda w\T\Sig w$, subject to staying on the budget hyperplane. With the
projection $P=I-\tfrac1n\mathbf 1\mathbf 1\T$ onto $T_w\Simplex$, this is the projected
gradient-ascent flow
\begin{equation}
\dot w = P\,\nabla f(w) = P\big(\mu-2\lambda\Sig w\big),\qquad w(0)=A .
\label{eq:gradflow}
\end{equation}
Since $\mathbf 1\T P=0$ the budget is conserved, and the unique stationary point of
\eqref{eq:gradflow} on the hyperplane is precisely the Markowitz target $B$ of
\eqref{eq:markowitz}: the flow \emph{does} arrive at $B$. It embodies ``improve the portfolio as
fast as possible at each instant'' while completely ignoring the risk of the transition itself.

\smallskip\noindent\textbf{(iii) Risk-optimal rebalancing --- the geodesic $p_2$.}
The minimiser $p_2=\gamma^\star$ of \eqref{eq:risklen} under the full metric $G$, solving the
boundary-value problem \eqref{eq:geodesic}. Unlike $p_1$, it accounts for the cumulative risk of
\emph{every} intermediate portfolio, and unlike the line it is sensitive to how risk varies across
the simplex.

\subsection{Curvature}\label{sec:curvature-def}

Whether a geodesic can differ from a straight line is governed by curvature. For the two-dimensional
case ($n=3$) we compute the Gaussian curvature
\begin{equation}
K=\frac{R_{1212}}{\det g},\qquad
R^{\ell}_{\ i j k}=\partial_i\Gamma^{\ell}_{jk}-\partial_j\Gamma^{\ell}_{ik}
+\Gamma^{\ell}_{im}\Gamma^{m}_{jk}-\Gamma^{\ell}_{jm}\Gamma^{m}_{ik},
\label{eq:curv}
\end{equation}
from the induced metric~\eqref{eq:induced} by finite differences. Curvature is the intrinsic,
coordinate-free signal that the risk landscape is genuinely warped; $K\equiv0$ means it is flat and
straight-line rebalancing cannot be improved upon.

\section{Theoretical results}\label{sec:theory}

\subsection{When geometry is redundant: flatness of the market metric}

\begin{proposition}[Market risk alone $\Rightarrow$ direct rebalancing is optimal]\label{prop:flat}
If only market risk is priced ($\kappa=0$, so $G\equiv\Sig$ is constant), then all Christoffel
symbols vanish, the manifold is flat ($K\equiv0$), and the unique risk-geodesic between any two
portfolios $A,B$ is the straight-line segment $p_0(t)=(1-t)A+tB$. Consequently the
direct rebalancing path is exactly risk-optimal, and no curved route can reduce transition risk.
\end{proposition}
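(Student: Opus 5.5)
The plan is to work entirely in the global chart $u$ and exploit that, when $\kappa=0$, the induced metric~\eqref{eq:induced} reduces to $g(u)=J\T\Sig J$. This is a constant matrix, because $J$ is the constant Jacobian of the affine embedding and $\Sig$ does not depend on $w$. It is also positive definite: $J$ has full column rank $n-1$ (its top block is $I_{n-1}$), so $J\T\Sig J\succ0$ whenever $\Sig\succ0$. With all derivatives $\partial_\ell g_{ij}$ equal to zero, the formula for $\Gamma^k_{ij}$ gives $\Gamma\equiv0$ identically. Then every term of $R^\ell_{\ ijk}$ in~\eqref{eq:curv} vanishes, being either a derivative of $\Gamma$ or a product of $\Gamma$'s. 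Hence $R\equiv0$, and for $n=3$ we get $K=R_{1212}/\det g\equiv0$, with $\det g>0$.

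Next comes the identification of geodesics. With $\Gamma\equiv0$, the geodesic equation~\eqref{eq:geodesic} reads $\ddot u=0$ with $u(0)=u_A$ and $u(1)=u_B$. Its unique solution is $u(t)=(1-t)u_A+tu_B$. Mapping back through the affine embedding $w(u)$ gives $w(u(t))=(1-t)A+tB=p_0(t)$, which stays in $\Simplex$ because the simplex is convex. This shows $p_0$ is the unique solution of the boundary-value problem. It does not yet show global minimality among all curves, so I would add a direct argument.

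For global optimality I would use the factorisation $\Sig=L\T L$ (Cholesky or symmetric square root) to write the integrand as $\|L\dot\gamma(t)\|_2$. For any admissible curve $\gamma$, the triangle inequality for integrals gives
\[
\risk[\gamma]=\int_0^1\|L\dot\gamma\|\,\mathrm dt\ \ge\ \Big\|L\!\int_0^1\dot\gamma\,\mathrm dt\Big\|=\|L(B-A)\|=\sqrt{(B-A)\T\Sig(B-A)}=\risk[p_0].
\]
So the straight line attains the lower bound. For uniqueness up to reparametrisation, note that equality in the integral triangle inequality in a strictly convex norm forces $L\dot\gamma(t)$ to be a nonnegative multiple of the fixed vector $L(B-A)$ for almost every $t$. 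Since $L$ is invertible, $\dot\gamma(t)$ is then parallel to $B-A$ with a nonnegative coefficient, so the trace of $\gamma$ is the segment $[A,B]$. Among energy minimisers, which carry the affine parametrisation as noted in Section~\ref{sec:length}, this gives exactly $p_0$.

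The step I expect to need most care is this last one: making "unique geodesic" precise. Taken literally, it means either the unique solution of the geodesic BVP or the unique length minimiser modulo reparametrisation. Equality in the Minkowski inequality needs a short justification, which I would give via equality in Cauchy--Schwarz applied pointwise to $\langle L\dot\gamma, e\rangle\le\|L\dot\gamma\|$ with $e=L(B-A)/\|L(B-A)\|$. The other possible subtlety is that the competing curves must lie in the open simplex. This is harmless, because the lower bound holds for all curves in the hyperplane and $p_0$ itself lies in $\Simplex$.
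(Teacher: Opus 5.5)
Your proposal is correct and follows the paper's proof. Constancy of $g=J^{\mathsf T}\Sigma J$ kills the Christoffel symbols, and hence the Riemann tensor and $K$, and it reduces the geodesic equation to $\ddot u=0$ with the affine solution $p_0$. Your global-minimality step via $\Sigma=L^{\mathsf T}L$, the integral triangle inequality and the Cauchy--Schwarz equality case just spells out the paper's closing remark: a constant positive-definite form makes the simplex isometric to a convex subset of Euclidean space, where the segment is the unique length minimiser up to reparametrisation.
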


\begin{proof}
With $G$ constant the induced metric \eqref{eq:induced} $g=J\T\Sig J$ is constant in $u$, so
$\partial_i g_{j\ell}\equiv0$ and hence $\Gamma^k_{ij}\equiv0$. The geodesic
equation~\eqref{eq:geodesic} reduces to $\ddot u=0$, whose solutions with the given endpoints are
the affine segments $u(t)=(1-t)u_A+tu_B$, i.e.\ straight lines in $w$. Every term of the Riemann
tensor~\eqref{eq:curv} is built from $\Gamma$ and its derivatives, so $R\equiv0$ and $K\equiv0$.
A constant positive-definite quadratic form makes $(\Simplex,g)$ isometric to a convex subset of
Euclidean space, in which the segment is the unique length-minimiser.
\end{proof}

Proposition~\ref{prop:flat} is the framework's honesty clause: it identifies exactly when the
geometric machinery is unnecessary. If a manager prices nothing but covariance risk, they should
rebalance in a straight line, and this paper has nothing to add. Everything interesting happens
when the metric varies with position.

\subsection{When geometry helps, it never hurts: weak domination}

\begin{proposition}[The geodesic weakly dominates every path]\label{prop:dom}
Let $G$ be any positive-definite risk metric and let $\gamma^\star$ be a risk-length--minimising
geodesic from $A$ to $B$. Then for every admissible reallocation $\gamma$ with the same endpoints,
$\risk[\gamma^\star]\le\risk[\gamma]$. In particular
\begin{equation}
\risk[p_2]\ \le\ \risk[p_0]
\qquad\text{and}\qquad
\risk[p_2]\ \le\ \risk[p_1],
\end{equation}
so the transition-risk saving of the geodesic over both direct and myopic rebalancing is always
non-negative. The improvement is strictly positive whenever the competing path is not itself a
geodesic of $G$.
\end{proposition}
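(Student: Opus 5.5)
The first inequality is essentially the definition of $\gamma^\star$. I will make that precise and then turn to the strictness clause, which is where the real content lies.

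\emph{Step 1: the weak inequalities.} Fix the class $\mathcal C_{A,B}$ of piecewise-$C^1$ curves $\gamma:[0,1]\to\Simplex$ with $\gamma(0)=A$ and $\gamma(1)=B$. For each $\gamma\in\mathcal C_{A,B}$ the functional \eqref{eq:risklen} is well defined and finite. This holds because $G$ is continuous and positive-definite on the interior, and any such curve has compact image inside the open simplex.

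By hypothesis $\gamma^\star$ attains $\inf_{\mathcal C_{A,B}}\risk$, so $\risk[\gamma^\star]\le\risk[\gamma]$ for all $\gamma\in\mathcal C_{A,B}$. It then remains to check that $p_0$ and $p_1$ lie in $\mathcal C_{A,B}$.

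\begin{itemize}
\item For $p_0$: the simplex is convex, so the segment stays in the interior.
\item For $p_1$: the flow \eqref{eq:gradflow} reaches $B$ only as $s\to\infty$. I would handle this by reparametrising $s=-\log(1-t)$, or by using a time-$T$ truncation together with a limit. Because the projected gradient system is linear with a contracting matrix $-2\lambda P\Sig P$ on the tangent hyperplane, $\|\dot w(s)\|$ decays exponentially. Hence $\int_0^\infty\sqrt{\dot w\T G\dot w}\,\mathrm ds<\infty$, and the closure is a rectifiable curve from $A$ to $B$.
\item Admissibility of $p_1$ also requires that the flow stays in the open simplex. I would either assume this, as an admissibility hypothesis on $A,B$, or note that otherwise $\risk[p_1]=+\infty$ under the Fisher--Rao term, in which case the inequality holds trivially.
\end{itemize}

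Reparametrisation invariance of $\risk$ ensures the choice of time variable does not matter.

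\emph{Step 2: strictness.} The claim needs sharpening. Being ``not a geodesic'' should mean \emph{not a length-minimiser}, or, when minimisers are unique, not a reparametrisation of $\gamma^\star$. Otherwise a non-minimising geodesic, or a second minimiser, would be a counterexample. With that reading I would argue by contrapositive.

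Suppose $\risk[\gamma]=\risk[\gamma^\star]$. Then $\gamma$ is itself a length-minimiser in $\mathcal C_{A,B}$. Reparametrise it proportionally to arc length. By the Cauchy--Schwarz equality case already noted for the energy $E$, it then minimises $E$. A minimiser of $E$ satisfies the Euler--Lagrange equations, which are exactly \eqref{eq:geodesic}. Standard first-variation arguments, together with regularity at corners, show that a piecewise-$C^1$ minimiser is smooth. So $\gamma$ is a geodesic of $G$, and equality forces geodesicity. For $p_0$ with $\kappa=0$ this recovers Proposition~\ref{prop:flat}, where equality indeed holds.

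\emph{Main obstacle.} The delicate points are existence of $\gamma^\star$ and the treatment of $p_1$, not the inequality itself. Existence is assumed in the statement, but I would remark on why it is reasonable. The Shahshahani term makes the boundary infinitely far away: it has the same divergence as the Fisher--Rao distance to the boundary. So for $\kappa>0$ the manifold is complete, and Hopf--Rinow yields a minimising geodesic.

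For the endogenous metric, positive-definiteness alone does not give completeness. There I would restrict to the stated hypothesis that $\gamma^\star$ exists. For the $p_1$ comparison, I would first try to show that the exponential-decay bound keeps the curve a finite distance from the boundary, which requires $A$ and $B$ interior.
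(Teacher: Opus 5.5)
Your argument is the paper's. The weak inequalities follow from minimality of $\gamma^\star$ once $p_0$ and $p_1$ are admitted as competitors. Strictness is the contrapositive of ``a length-minimiser is, after reparametrisation, a geodesic'', which you justify via Cauchy--Schwarz and the Euler--Lagrange equations where the paper only asserts it. Your handling of $p_1$ (exponential decay under $-2\lambda P\Sig P$ on the tangent hyperplane, truncation at time $T$) fills a step the paper passes over.

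Three side remarks are wrong, although none is load-bearing.

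(1) The Shahshahani term does not put the boundary at infinite distance. With $x_i=2\sqrt{w_i}$ one has $\mathrm dx_i^2=\mathrm dw_i^2/w_i$ and $\sum_i x_i^2=4$. So $(\Simplex,\diag(1/w))$ is isometric to the open positive orthant of a round sphere of radius $2$, which has diameter $\pi$. For $\kappa>0$, adding the bounded term $\Sig$ (or the bounded factor $\Phi$) changes lengths only by bounded factors. A $1/w_i$ blow-up is too weak for completeness, so Hopf--Rinow is unavailable and existence of $\gamma^\star$ genuinely must be assumed, as the statement does.

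(2) For the same reason, ``$\risk[p_1]=+\infty$ if the flow leaves the simplex'' is false. At a transversal crossing $w_i\approx c\,(s_0-s)$ the Fisher integrand behaves like $(s_0-s)^{-1/2}$, which is integrable. Past the face the path lies outside $\Simplex$, where $G$ need not be positive-definite, so $\risk[p_1]$ is undefined rather than infinite. Only your other option works: taking ``the flow stays in $\Simplex$'' as a hypothesis, which the paper tacitly does. Exponential decay toward $B$ cannot supply that hypothesis, since it says nothing about excursions across a face during the transient.

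(3) The strictness clause needs no sharpening of the kind you describe. Non-minimising geodesics and second minimisers \emph{are} geodesics, so they fail the hypothesis ``not a geodesic'' and could only contradict the converse. The real imprecision is parametrisation: ``geodesic'' must mean ``geodesic up to reparametrisation''. The flow-time parametrisation of $p_1$ is never affine, yet its trace can coincide with the geodesic. For example, take $\kappa=0$ and $A-B$ an eigenvector of $P\Sig P$ on the tangent space; then $p_1$ runs along the segment and $\risk[p_1]=\risk[p_2]$. Under that reading, your contrapositive proves exactly the clause as written.
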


\begin{proof}
Immediate from the definition of $\gamma^\star$ as the minimiser of $\risk$ over all curves joining
$A$ to $B$; the line and the gradient flow~\eqref{eq:gradflow} are two such curves. Strictness
follows because a non-geodesic curve has strictly larger length than the minimising geodesic in a
Riemannian manifold.
\end{proof}

Proposition~\ref{prop:dom} is the core value proposition. Unlike heuristics that may help on
average but backfire on individual transitions, the geodesic reallocation carries a guarantee: it
can never do worse than rebalancing directly. The only questions are \emph{how much} it helps and
\emph{when}---questions we answer computationally.

\subsection{Where large savings come from: endogenous, crowding-driven risk}\label{sec:endo}

Why is the concentration saving of metric~\eqref{eq:fisher-metric} typically modest? Because the
straight line already diversifies. Portfolio concentration is naturally measured by the Herfindahl
index $\Herf(w)=\sum_i w_i^2$, a \emph{convex} function of $w$. Along the chord,
$\Herf\big((1-t)A+tB\big)\le(1-t)\Herf(A)+t\Herf(B)$, so the midpoint of a direct rebalancing is
\emph{less} concentrated than the endpoints. Any purely concentration-based penalty is thus already
small in the interior of the chord, leaving little for a geodesic to improve. This is not a defect
of the computation: it is a theorem-level reason why direct rebalancing is already good against
concentration risk, and it explains the small, strictly positive savings we report for
metric~\eqref{eq:fisher-metric}.

Large savings require a priced risk that is \emph{not} convex-favourable to the chord. Real markets
supply one. As the model's own motivation notes, correlations are not static: crowded co-holdings
of correlated names co-crash in stress (``correlation breakdown''), so \emph{holding two correlated
assets together} is far riskier than the static covariance suggests. Crucially, the exposure of a
pair $(i,j)$ to this effect scales like the \emph{product} $w_iw_j$---a \emph{bilinear}, non-convex
quantity. For a transition from a portfolio concentrated in asset $i$ to one concentrated in asset
$j$, the endpoints have $w_iw_j\approx0$, but the midpoint of the direct chord has
$w_iw_j\approx\tfrac14$: the straight line drives the book straight into the crowded region, even
though both endpoints avoid it.

We encode this with a scalar \emph{instability field} over a set $\mathcal C$ of crowded (highly
correlated) pairs,
\begin{equation}
\Phi(w)=1+\eta\!\!\sum_{(i,j)\in\mathcal C}\! w_i w_j,\qquad \eta\ge0,
\label{eq:phi}
\end{equation}
and define the \emph{endogenous risk metric}
\begin{equation}
G_{\mathrm{endo}}(w)=\Phi(w)\,\Big(\Sig+\kappa\,\diag(1/w)\Big).
\label{eq:endo-metric}
\end{equation}
Moving \emph{anywhere} while the book sits in a crowded configuration is amplified by $\Phi$, so a
geodesic is repelled from the high-$\Phi$ ridge and, if a third uncorrelated asset is available,
routes through it: it \emph{unwinds} the position in $i$ into the safe asset first and only then
builds $j$, never holding the crowded pair together. This is the ``intentionally take a detour to
avoid a dangerous intermediate state'' behaviour that direct rebalancing cannot express, and---as
Section~\ref{sec:results} shows---it produces transition-risk savings an order of magnitude larger
than the concentration metric. Metric~\eqref{eq:endo-metric} still reduces to the flat market metric
as $\eta,\kappa\to0$, so Propositions~\ref{prop:flat} and~\ref{prop:dom} continue to apply.

\section{Computational methodology}\label{sec:methodology}

\paragraph{Market.} To keep the study reproducible and free of vendor data we simulate a
six-asset market from a two-factor return model (a broad ``market'' factor and a ``growth''
factor plus idiosyncratic noise) over $N=1000$ daily observations, and estimate $\mu$ and $\Sig$
from the simulated panel. The estimated annualised volatilities span $13.8\%$--$28.9\%$ and
pairwise correlations $0.30$--$0.46$, a realistic moderately-correlated equity cross-section.
Because sample means over $10^3$ days are notoriously noisy---the very sensitivity that makes raw
Markowitz portfolios extreme---we use the model's expected returns as the return view, playing the
role of an equilibrium or Black--Litterman input; this keeps the target $B$ interior and the
demonstration clean. At $\lambda=4$ the target~\eqref{eq:markowitz} is the well-diversified
$B=(0.15,0.14,0.08,0.27,0.10,0.27)$, and we take a deliberately concentrated start
$A=(0.55,0.05,0.08,0.05,0.22,0.05)$.

\paragraph{Metrics.} We set the concentration weight to $\kappa=4\kappa_0$ with
$\kappa_0=\bar{\Sig}_{ii}/n$ the natural scale that makes the Fisher term commensurate with the
market term at the centroid. For the endogenous metric~\eqref{eq:endo-metric} the crowded set
$\mathcal C$ is the two most-correlated asset pairs (correlations $0.46$ and $0.44$) and
$\eta=25$.

\paragraph{Geodesic solver.} We compute geodesics by direct minimisation of the discretised energy
$E[\gamma]=K\sum_{k=0}^{K-1}\Delta w_k\T\,G(\bar w_k)\,\Delta w_k$ over $K\!+\!1$ nodes with fixed
endpoints, where $\bar w_k$ is the midpoint of segment $k$. Each interior node is parametrised as
$w_k=\mathrm{softmax}(z_k)$, which keeps every node \emph{strictly inside} the simplex regardless
of the optimiser's steps, so the Fisher term never blows up. The energy and its exact gradient (by
automatic differentiation) are handed to a quasi-Newton (L-BFGS) minimiser, warm-started from the
straight line. This construction respects Proposition~\ref{prop:dom} by design: the straight line
is a feasible point, so the returned geodesic never has larger energy. The gradient-flow path
$p_1$ is obtained by integrating~\eqref{eq:gradflow} to convergence and reporting its
(reparametrisation-invariant) risk-length. Gaussian curvature~\eqref{eq:curv} is evaluated by
finite differences of the induced metric.

\section{Results}\label{sec:results}

\subsection{Setup: the efficient frontier and the transition}

Figure~\ref{fig:frontier} shows the efficient frontier of the six-asset market with the
concentrated start $A$ and the mean--variance target $B$. $A$ sits well inside the frontier (it is
both riskier and lower-returning than efficient portfolios of comparable volatility); the task is
to travel from $A$ to $B$ at least accumulated risk.

\begin{figure}[t]
\centering
\includegraphics[width=0.68\linewidth]{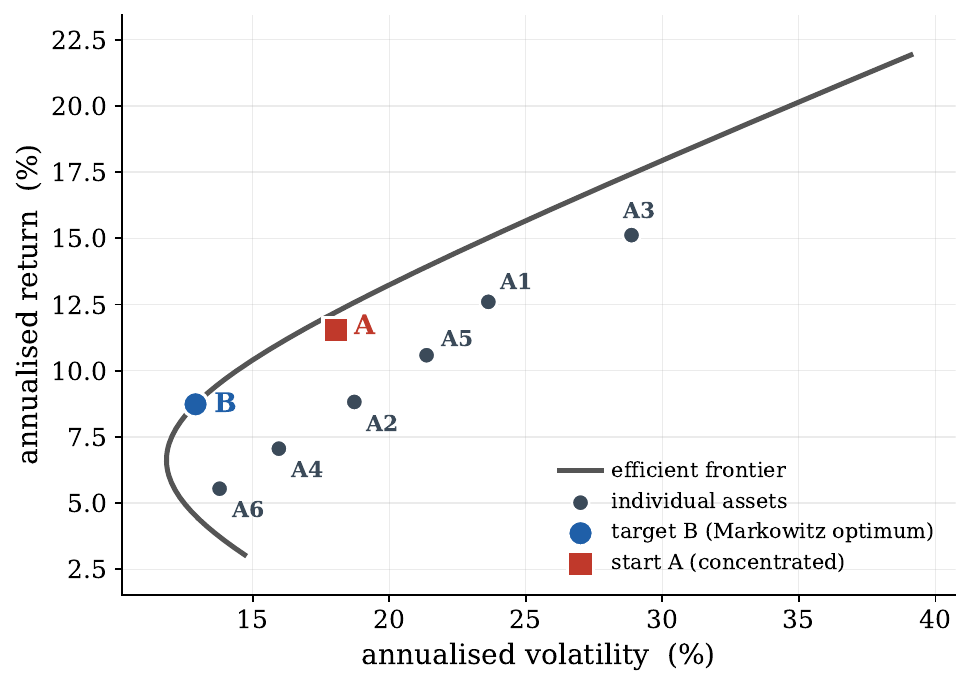}
\caption{Efficient frontier of the simulated six-asset market. The concentrated starting portfolio
$A$ (red square) is interior and dominated; the Markowitz target $B$ (blue) lies on the frontier.
The labelled points $A_1,\dots,A_6$ are the individual assets. Every reallocation studied connects
these two points.}
\label{fig:frontier}
\end{figure}

\subsection{The mechanism, in three assets}\label{sec:tri}

The clearest view of \emph{why} a geodesic helps is in three assets, where the simplex is a
triangle we can draw (Figure~\ref{fig:triangle}). Assets $0$ and $1$ are a correlated, crowded
pair; asset $2$ is a relatively uncorrelated ``diversifier''. The book starts concentrated in
asset $0$ ($A$) and must move to a portfolio concentrated in asset $1$ ($B$). The background shading
is the instability field $\Phi(w)$ of~\eqref{eq:phi}: it is highest (red) along the bottom edge,
where the crowded pair is co-held, and lowest (green) toward the asset-$2$ vertex.

The straight line (red dashed) runs straight along the bottom edge---through the most dangerous
region---because linear interpolation between a portfolio in asset $0$ and one in asset $1$ holds
both together at every intermediate step. The risk geodesic (blue) instead \emph{bows upward
toward the diversifier}: it unwinds asset $0$ into asset $2$, carries the exposure through the
safe region, and only then builds asset $1$, temporarily allocating up to $18\%$ to asset $2$ (the
straight line never exceeds $5\%$). It thereby avoids ever holding the crowded pair at full weight.
This detour is not free in Euclidean terms---it is a longer trade list---but in risk terms it is
$4.3\%$ cheaper. The mechanism is exactly the ``navigate around the high-risk area'' picture, now
realised by an actual solved geodesic rather than a schematic.

\begin{figure}[t]
\centering
\includegraphics[width=0.66\linewidth]{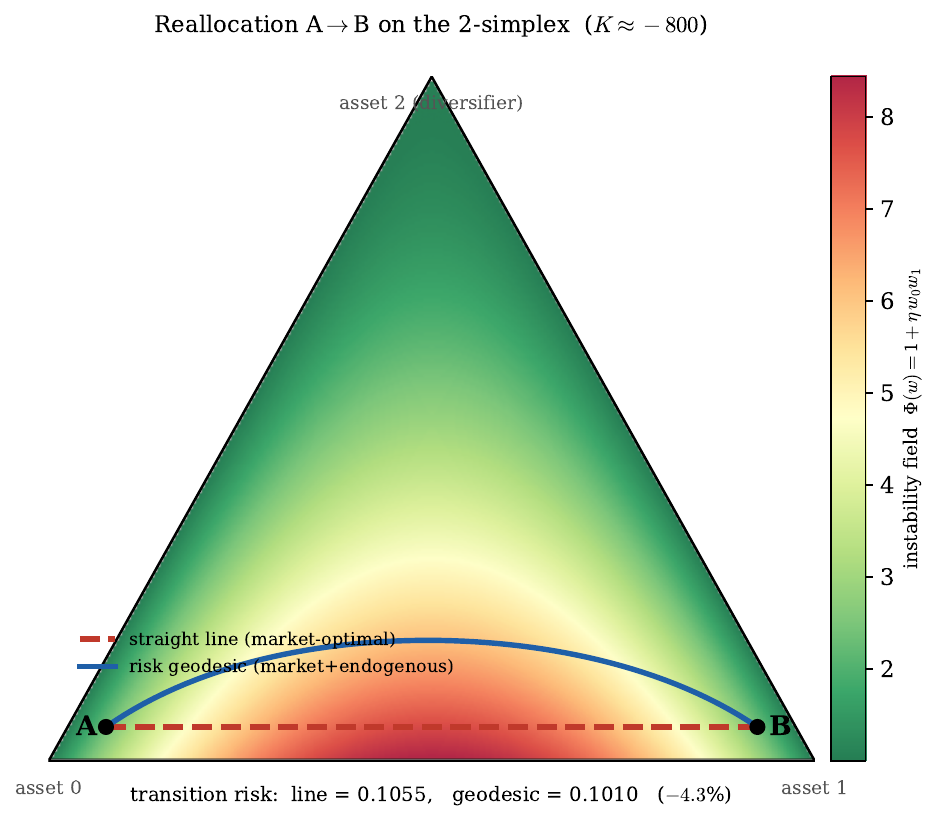}
\caption{A reallocation on the three-asset simplex under the endogenous metric. Background: the
instability field $\Phi$ (red = crowded/high-risk, green = safe). The straight line
(market-optimal) ploughs along the crowded bottom edge; the risk geodesic arcs toward the
uncorrelated asset-$2$ vertex, cutting transition risk by $4.3\%$. The manifold has strongly
negative Gaussian curvature ($K\approx-8.0\times10^2$), the intrinsic signature of the warped risk
landscape.}
\label{fig:triangle}
\end{figure}

\paragraph{Curvature confirms the picture.} We compute the Gaussian curvature of this three-asset
manifold at its centre for all three metrics. The market-only metric gives $K=0.000$ to numerical
precision---flat, exactly as Proposition~\ref{prop:flat} demands, so its geodesic is the straight
line. Adding the Fisher concentration term gives $K\approx-4.3\times10^2$, and the full endogenous
metric $K\approx-8.0\times10^2$. Curvature is strictly negative and grows in magnitude as more
position-dependent risk is priced: the landscape is genuinely, and increasingly, warped, and
geodesics must bend.

\subsection{The transition in six assets}

Figure~\ref{fig:weights} shows the full six-asset reallocation under the endogenous metric: each
asset's weight along the straight line (dashed) and along the geodesic (solid). The geodesic keeps
the weights of the crowded names lower during the middle of the transition and leans on the less
correlated names to carry exposure across, before converging to the same target $B$. The
qualitative signature of the three-asset case survives in higher dimension, though it is visually
subtler.

\begin{figure}[t]
\centering
\includegraphics[width=0.74\linewidth]{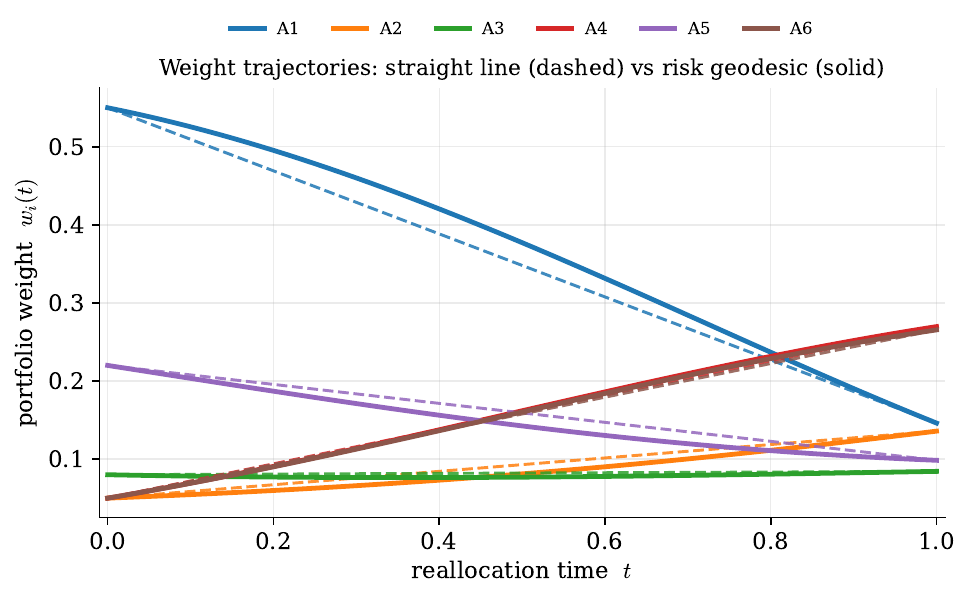}
\caption{Weight trajectories for the six-asset reallocation $A\to B$ under the endogenous metric:
straight line (dashed) versus risk geodesic (solid), per asset. Both paths share the endpoints;
the geodesic redistributes the intermediate exposures to avoid crowded co-holdings.}
\label{fig:weights}
\end{figure}

For this single transition the geodesic beats the straight line by $0.98\%$ under the endogenous
metric and by $0.25\%$ under the concentration metric, and it beats the \emph{myopic gradient-flow}
path $p_1$ by $3.9\%$: the greedy strategy, which improves the Markowitz objective as fast as
possible, is markedly risk-inefficient during the transition because it drives hard through
whatever region is locally most return-improving, heedless of the risk it accumulates on the way.

\subsection{How savings scale with priced risk}

Proposition~\ref{prop:flat} says the saving must vanish when position-dependent risk is switched
off. Figure~\ref{fig:scaling} confirms this and shows how the saving grows as it is switched on.
Under the concentration metric the reduction rises monotonically from zero with $\kappa$,
saturating below $0.30\%$ for this transition: concentration risk alone buys little, for the
convexity reason of Section~\ref{sec:endo}. Under the endogenous metric the reduction grows
steadily with the crowding strength $\eta$, reaching $2.65\%$ at $\eta=50$---an order of magnitude
larger, and still climbing (at $\eta=0$ it coincides with the small concentration-metric value,
since the endogenous metric then reduces to the Fisher metric). The contrast between the two panels
is the empirical face of the convex-versus-bilinear distinction.

\begin{figure}[t]
\centering
\includegraphics[width=0.92\linewidth]{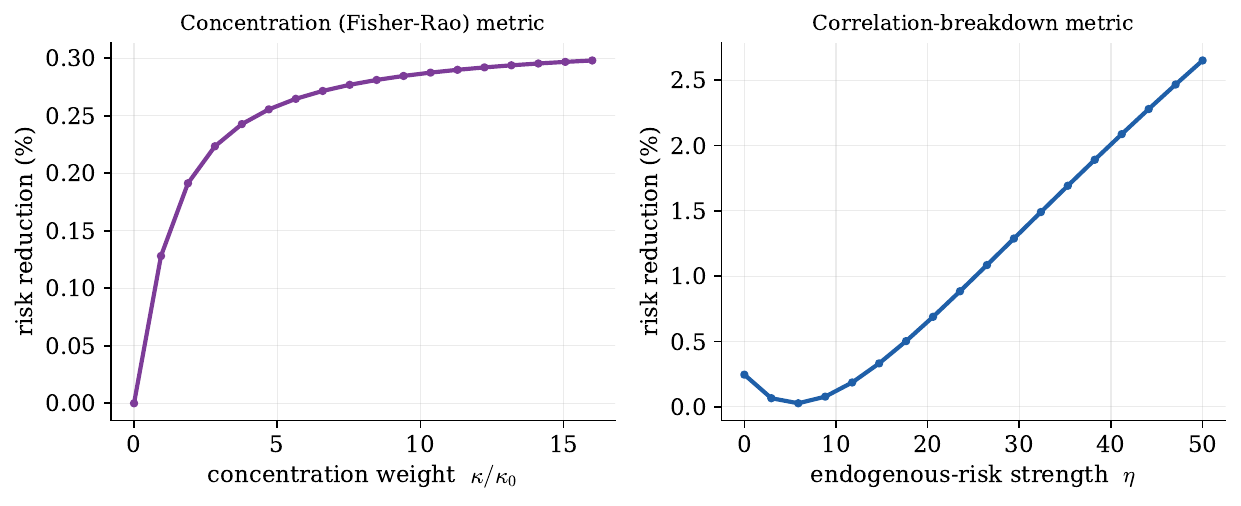}
\caption{Transition-risk reduction (geodesic vs.\ line) as a function of priced position-dependent
risk. Left: concentration weight $\kappa$ (Fisher--Rao); the effect is real, monotone, but modest.
Right: crowding strength $\eta$ (endogenous metric, concentration weight held fixed); the effect is
an order of magnitude larger and still growing. The left curve starts at zero when $\kappa=0$ (the
pure market metric, Proposition~\ref{prop:flat}); the right curve starts from the small
concentration-metric baseline and climbs steeply with $\eta$.}
\label{fig:scaling}
\end{figure}

\subsection{Monte-Carlo distribution of savings}

To characterise the saving beyond a single transition we draw $100$ random starting portfolios
$A\sim\mathrm{Dirichlet}(0.6\cdot\mathbf 1)$---spanning diffuse and concentrated books---and
reallocate each to the common target $B$. Figure~\ref{fig:hist} shows the distribution of the
risk saving of the geodesic over the straight line, under both metrics.

\begin{figure}[t]
\centering
\includegraphics[width=0.74\linewidth]{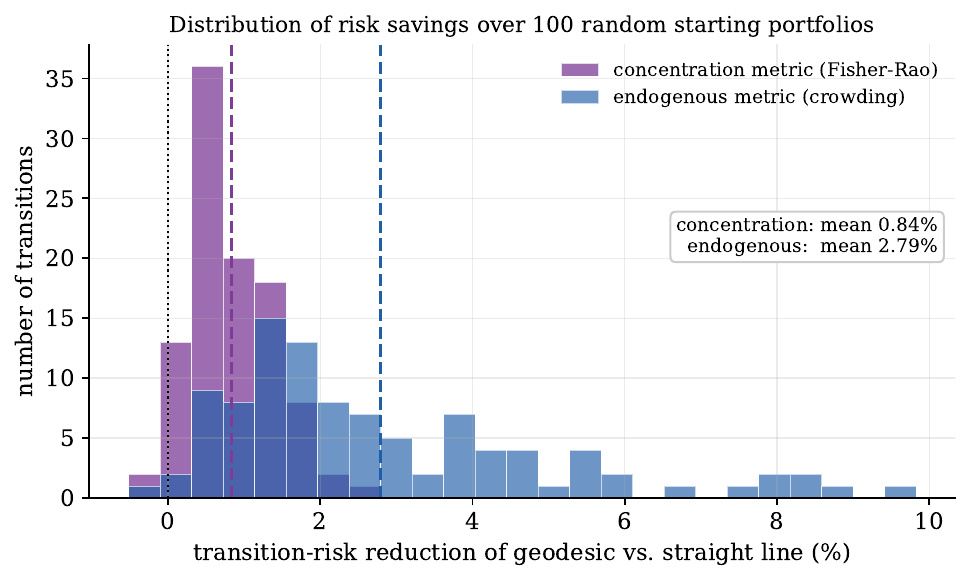}
\caption{Distribution of the transition-risk saving (geodesic vs.\ straight line) over $100$
random starting portfolios reallocated to the same target. Dashed lines mark the means. The
concentration metric saves $0.84\%$ on average; the endogenous metric $2.79\%$, with a right tail
beyond $9\%$. Both distributions sit almost entirely at or above zero, as the domination theorem
requires.}
\label{fig:hist}
\end{figure}

The concentration metric yields a mean saving of $0.84\%$ (median $0.72\%$, s.d.\ $0.53\%$,
maximum $2.47\%$); the endogenous metric a mean of $2.79\%$ (median $2.18\%$, s.d.\ $2.16\%$,
maximum $9.53\%$). Consistent with Proposition~\ref{prop:dom}, $98\%$ and $99\%$ of transitions
respectively have a non-negative saving; the handful of tiny negative values (all below $0.5\%$ in
magnitude) reflect finite solver tolerance rather than a genuine violation, since the straight
line is always available to the minimiser. The endogenous saving is not a knife-edge phenomenon:
it is present across the ensemble and routinely reaches several percent.

\subsection{What predicts the saving?}\label{sec:regression}

Which transitions benefit most? We regress the endogenous saving on three geometric features of
the transition: its Euclidean size $\lVert A-B\rVert$; the \emph{excess crowding} of the direct
chord, defined as the peak co-holding $\sum_{(i,j)\in\mathcal C}w_iw_j$ along the straight line
minus the larger of its endpoint values (i.e.\ how far the chord is forced into the crowded region
\emph{beyond} what the endpoints require); and the starting concentration $\Herf(A)$.
Table~\ref{tab:reg} reports the fit.

\begin{table}[t]
\centering
\caption{OLS regression of the endogenous transition-risk saving (in \%) on the geometry of the
transition, over $N=100$ random reallocations to the common target ($R^2=0.51$). Positive
coefficients mean the feature increases the saving.}
\label{tab:reg}
\setlength{\tabcolsep}{10pt}
\renewcommand{\arraystretch}{1.25}
\begin{tabular}{@{}l r r r r@{}}
\toprule
Feature & Coefficient & Std.\ error & $t$-statistic & $p$-value \\
\midrule
Intercept                            & $0.72$  & $0.58$ & $1.25$  & $0.22$ \\
Transition size $\lVert A-B\rVert$   & $7.89$  & $2.03$ & $3.89$  & $1.9\times10^{-4}$ \\
Excess chord crowding                & $265.3$ & $40.4$ & $6.57$  & $2.6\times10^{-9}$ \\
Start concentration $\Herf(A)$       & $-5.03$ & $2.64$ & $-1.91$ & $0.059$ \\
\bottomrule
\end{tabular}
\end{table}

Two effects stand out. Larger transitions save more ($t=3.9$): there is simply more risky
distance over which to optimise the route. But the dominant predictor by far is \emph{excess
crowding} ($t=6.6$, $p<10^{-8}$): the more the direct chord is forced through the non-convex
crowded region relative to its endpoints, the more the geodesic saves by routing around it. This
is precisely the mechanism of Section~\ref{sec:endo}, now confirmed statistically---the saving is
a function of \emph{how badly the straight line misbehaves}, which is exactly what a bilinear,
non-convex risk creates and a convex one does not. The starting concentration $\Herf(A)$ adds
little once these are controlled for.

\paragraph{The centroid as a ``fixed point''.} The regression also formalises an intuition about
where geometry is inert. When $A$, $B$ and the most-diversified portfolio (the simplex centroid,
the global minimum of concentration risk) are close to collinear, the direct chord already tracks
the low-risk corridor, excess crowding is near zero, and the geodesic barely deviates from the
line. The centroid thus behaves like an attractor of ``easy'' transitions: reallocations aimed
along the diversified direction gain little from the geometry, while reallocations that would
otherwise cut across crowded regions gain the most. This sharpens the original conjecture that a
special low-risk point governs the size of the deviation.

\section{Discussion}\label{sec:discussion}

\paragraph{An honest account of magnitude.} The framework does not manufacture large numbers. When
the priced risk is concentration---a convex quantity---the straight line is already close to
optimal because it diversifies at its midpoint, and the geodesic saves only tens of basis points.
We regard this not as a weakness but as a feature: it is a theorem-backed statement about
\emph{when} direct rebalancing is good enough, and the small savings we report ($20$--$80$ bps)
are exactly the honest magnitude such transitions admit. Yet tens of basis points of transition
risk, saved on every rebalancing of a large book and compounded over many rebalancings, is not
economically negligible; and the framework tells the manager, in advance and per transition,
whether the number will be small or large.

\paragraph{Where the geometry earns its keep.} The saving becomes first-order---several percent,
with tails approaching $10\%$---exactly when the priced risk is \emph{not} convex-favourable to
the chord. Endogenous, crowding-driven risk is the canonical case: because exposure to correlation
breakdown scales bilinearly in the weights, the direct path is forced into the danger zone that a
geodesic can circumvent. This is the regime where ``take a temporary detour to avoid a dangerous
intermediate state'' is a genuinely non-obvious, genuinely valuable strategy, and it is precisely
the regime a manager cannot navigate by intuition alone.

\paragraph{Curvature as the diagnostic.} Gaussian curvature cleanly separates the two regimes:
$K\equiv0$ for the market metric (geometry redundant), $K<0$ and growing once position-dependent
risk is priced (geometry active). This operationalises the literature's association of curvature
with concentration and systemic fragility \cite{sandhu2016}: here curvature is not merely a
correlate of risk but the computable quantity that certifies whether a curved reallocation can beat
a straight one.

\section{Limitations and extensions}\label{sec:limitations}

\emph{Transaction costs.} A quadratic market-impact cost of trading adds a constant
positive-definite term $\Lambda$ to the metric, $G\mapsto G+\Lambda$; being constant it is flat and
does not by itself bend geodesics, but it does reweight the trade-off and is trivially
accommodated. Linear (proportional) costs break the Riemannian (quadratic) structure and lead to a
Finsler metric---a natural but more demanding extension.

\emph{Richer endogenous risk.} We modelled crowding with a static instability field; a fuller
treatment would let the effective covariance itself depend on the weights,
$\Sig\mapsto\Sig_{\mathrm{eff}}(w)$, capturing liquidity-driven volatility inflation of large
single-name positions and regime-dependent correlations. The machinery is unchanged---any smooth
SPD field is a metric---but the calibration is an empirical project in its own right.

\emph{Estimation and robustness.} The metric inherits the estimation error of $\Sig$ and $\mu$;
pairing the geometric route with shrinkage or distributionally-robust covariance estimates
\cite{blanchet2022} would guard the path against misspecification, and is a promising direction.

\emph{Scale and dynamics.} The energy-minimisation solver scales to realistic asset counts; the
open frontier is the \emph{dynamic} problem, in which $B$ itself drifts as parameters update, so
that the target is moving and the geodesic must be recomputed on the fly---a tracking problem on a
time-varying manifold.

\section{Conclusion}\label{sec:conclusion}

Mean--variance optimisation answers \emph{where} a portfolio should go; it is silent on \emph{how}
to get there. We have framed that second question as a problem of Riemannian geometry: allocations
are points of the simplex, risk defines a metric, the accumulated risk of a rebalancing is the
length of a curve, and the least-risk transition is a geodesic. The picture yields two clean
guarantees---direct rebalancing is exactly optimal when only market risk is priced, and the
geodesic never underperforms the straight line or the myopic flow when it is not---and a sharp,
computable criterion, curvature, for telling the two regimes apart. Applied to concentration risk
the savings are modest and honest; applied to endogenous, crowding-driven risk---where the direct
path is forced through a danger zone that a geodesic can route around---they become economically
significant, reaching several percent, and are predicted by a single geometric feature of the
transition. The result is a general-purpose, quantitative tool for transition management: the
manager supplies a starting book, a target, and the risks they wish to price, and the geometry
returns the smoothest, least-risky route between them.

\end{document}